\newtheorem{theorem}{Theorem}[section]
\newtheorem{lemma}[theorem]{Lemma}
\newtheorem{claim}[theorem]{Claim}
\newtheorem{conjecture}[theorem]{Conjecture}
\newtheorem{question}[theorem]{Question}
\theoremstyle{definition}
\newtheorem{example}[theorem]{Example}
\newtheorem{remark}[theorem]{Remark}
\numberwithin{equation}{section}
\numberwithin{figure}{section}
\numberwithin{table}{section}
\renewcommand{\(}{\textup{(}}
\renewcommand{\)}{\textup{)}}
\begin{document}
\baselineskip 14pt

\title{Generalized torsion  and decomposition of $3$--manifolds}
\author[T.Ito]{Tetsuya Ito}
\address{Department of Mathematics, Kyoto University, Kyoto 606-8502, JAPAN}
\email{tetitoh@math.kyoto-u.ac.jp}
\thanks{The first named author has been partially supported by JSPS KAKENHI Grant Number JP15K17540 and JP16H02145.}

\author[K. Motegi]{Kimihiko Motegi}
\address{Department of Mathematics, Nihon University, 
3-25-40 Sakurajosui, Setagaya-ku, 
Tokyo 156--8550, Japan}
\email{motegi@math.chs.nihon-u.ac.jp}
\thanks{The second named author has been partially supported by JSPS KAKENHI Grant Number JP26400099 and Joint Research Grant of Institute of Natural Sciences at Nihon University for 2018. }

\author[M. Teragaito]{Masakazu Teragaito}
\address{Department of Mathematics and Mathematics Education, Hiroshima University, 
1-1-1 Kagamiyama, Higashi-Hiroshima, 739--8524, Japan}
\email{teragai@hiroshima-u.ac.jp}
\thanks{The third named author has been partially supported by JSPS KAKENHI Grant Number JP16K05149.}

\dedicatory{}

\begin{abstract}
A nontrivial element in a group  is a \textit{generalized torsion element} 
if some nonempty finite product of its conjugates is the identity. 
We prove that any generalized torsion element in a free product of torsion-free groups is conjugate to a generalized torsion element in some factor group. 
This implies that the fundamental group of a compact orientable $3$--manifold $M$ has a generalized torsion element 
if and only if the fundamental group of some prime factor of $M$ has a generalized torsion element. 
On the other hand, 
we demonstrate that there are infinitely many toroidal $3$--manifolds whose fundamental group has a generalized torsion element, 
while the fundamental group of each decomposing piece has no such elements. 
\end{abstract}

\maketitle

{
\renewcommand{\thefootnote}{}
\footnotetext{2010 \textit{Mathematics Subject Classification.}
Primary 57M05, 20E06 Secondary 06F15, 20F60
\footnotetext{ \textit{Key words and phrases.}
fundamental group, generalized torsion element, free product, free product with amalgamation, prime decomposition, torus decomposition, stable commutator length}
}

\setcounter{tocdepth}{1}

\section{Introduction}
\label{introduction}

A nontrivial element in a group $G$ is a \textit{generalized torsion element} 
if some nonempty finite product of its conjugates is the identity.
As a natural generalization of the order of torsion element, the \emph{order} of a generalized torsion element $g$ is defined as 
\[
\min\{ k \geq 1 \: | \: \exists x_{1},\ldots,x_k \in G \mbox{ s.t. } (x_1 g x_1^{-1})(x_2 g x_2^{-1}) \cdots(x_{k}g x_{k}^{-1})=1\}. \]
 In the literature \cite{BL,LMR,MR1,MR2}, a group without generalized torsion element is called an $R^*$--group or a $\Gamma$--torsion-free group.

A group $G$ is \textit{bi-orderable} if $G$ admits a total ordering $<$ which is invariant under the multiplication from both left and right sides.
That is, if $g<h$, then $agb<ahb$ for any $g,h,a,b\in G$. Such a ordering is called a \emph{bi-ordering} on $G$.
In this paper, the trivial group $\{1\}$ is considered to be bi-orderable.

A generalized torsion element plays as a fundamental obstruction for a group to be bi-orderable, 
since a bi-orderable group has no generalized torsion element. 
Although the converse does not hold in general \cite[Chapter 4]{MR2}, 
the converse is known to be true for some classes of groups.
In \cite{MT2} the second and the third authors demonstrate that the converse is true for the fundamental group of any non-hyperbolic geometric $3$--manifold, 
and propose the following conjecture. 

\begin{conjecture}[\cite{MT2}]
\label{conj:bo}
Let $G$ be the fundamental group of a $3$--manifold. 
Then,
$G$ is bi-orderable if and only if
$G$ has no generalized torsion element.
\end{conjecture}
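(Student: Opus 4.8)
The plan is to prove the two implications separately and to organize the hard direction along the two canonical decompositions of a compact orientable $3$--manifold. The forward implication, that bi-orderability rules out generalized torsion, is immediate and is the fact already recorded in the text: in a bi-ordered group conjugation preserves sign, so if $g>1$ then $xgx^{-1}>1$ for every $x$, and hence any nonempty product of conjugates of a fixed $g\neq 1$ lies strictly on one side of $1$ and in particular is nontrivial. All the content is in the converse, so assume $G=\pi_1(M)$ has no generalized torsion and aim to build a bi-ordering.

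The first reduction is to prime pieces. Since every ordinary torsion element is a generalized torsion element (take all conjugating elements trivial), a generalized-torsion-free group is torsion-free; thus $M$ has no spherical prime summand with nontrivial finite fundamental group, and the prime decomposition $M=M_1\#\cdots\#M_n$ yields $G=\pi_1(M_1)*\cdots*\pi_1(M_n)$, a free product of torsion-free groups. By the free-product theorem of this paper, a generalized torsion element of such a free product is conjugate into a factor, so each $\pi_1(M_i)$ is again generalized-torsion-free. In the other direction, a free product of bi-orderable groups is bi-orderable (Vinogradov), so it suffices to prove the conjecture for each prime factor. This cleanly reduces the problem to prime $M$.

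For prime $M$ I would invoke geometrization. If $M$ is geometric and non-hyperbolic, bi-orderability is already settled in \cite{MT2}, so the open cases are when $M$ is hyperbolic or when $M$ is non-geometric with a nontrivial torus decomposition. In the latter case one cuts along the characteristic tori to obtain Seifert-fibered and hyperbolic pieces $M_v$ glued along incompressible tori, so that $G$ is an iterated amalgamated product of the vertex groups $\pi_1(M_v)$ over their $\mathbb{Z}^2$ peripheral subgroups. The natural strategy is then to bi-order each $\pi_1(M_v)$ and assemble a bi-ordering of $G$ by choosing the orderings on the vertex groups to be compatible on the amalgamating tori.

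This last step is exactly where the difficulty concentrates, and it is precisely the phenomenon this paper isolates: the absence of generalized torsion is \emph{not} local to the torus-decomposition pieces. The paper's own examples of toroidal manifolds carrying a generalized torsion element while every decomposing piece is generalized-torsion-free show that one cannot certify bi-orderability piece by piece—the obstruction lives in how the pieces are amalgamated. I therefore expect the honest outcome is that the conjecture is \emph{not} fully provable by these means as they stand. The genuine hard core is twofold: in the hyperbolic case there is at present no workable criterion linking bi-orderability of $\pi_1$ to generalized torsion, and in the amalgamated case one must understand when the vanishing of generalized torsion in the global group forces a \emph{compatible} system of bi-orderings on the vertex groups to exist across the gluing tori. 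The contributions of this paper should be read as reducing the conjecture to the prime case and sharply exposing this amalgamation obstruction, rather than as resolving it.
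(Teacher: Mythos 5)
You were asked to prove Conjecture~\ref{conj:bo}, which is exactly that: a conjecture, quoted from \cite{MT2}. The paper contains no proof of it, and it remains open, so there is no hidden argument to measure you against --- only the paper's partial results, and against those your assessment is essentially accurate. Your forward implication (bi-orderable $\Rightarrow$ no generalized torsion) is correct and standard. Your reduction to prime summands is also sound, with one correction: to conclude that each $\pi_1(M_i)$ is generalized-torsion-free from the hypothesis on $\pi_1(M)$ you do not need Theorem~\ref{free_product} at all. A generalized torsion element of a subgroup is automatically a generalized torsion element of the ambient group (conjugates in the subgroup are conjugates in the group), so generalized-torsion-freeness passes to all subgroups trivially. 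The paper's Theorem~\ref{free_product} is what powers the \emph{opposite} implication, namely Theorem~\ref{prime_decomposition}(1)--(2): a generalized torsion element of the free product must be conjugate into a factor, so that generalized-torsion-freeness of the factors lifts to the whole group. Your reduction only uses the trivial direction plus Vinogradov's theorem that free products of bi-orderable groups are bi-orderable, and that part is fine.

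Your final paragraph reaches the honest and correct conclusion: the conjecture cannot be settled by cutting along the JSJ tori and certifying the pieces separately, and the paper's own Theorem~\ref{torus_decomposition} is precisely the demonstration of why --- there exist toroidal manifolds whose decomposing pieces all have bi-orderable (hence generalized-torsion-free) fundamental groups while $\pi_1(M)$ carries a generalized torsion element. Note, though, that these examples do not threaten Conjecture~\ref{conj:bo} itself: in them $\pi_1(M)$ \emph{has} generalized torsion, so the conjecture merely predicts non-bi-orderability, which indeed holds. What they rule out is any proof strategy that localizes the problem to the torus-decomposition pieces. So the accurate summary of your proposal is: the easy direction and the prime reduction are proved (modulo the misattributed use of Theorem~\ref{free_product}), and the hyperbolic case together with the problem of assembling compatible bi-orderings across amalgamating tori is exactly where the conjecture remains open --- which is also where the paper leaves it.
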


Recall that every compact orientable $3$--manifold $M$ decomposes uniquely as a connected sum 
$M = M_1 \sharp M_2 \sharp \cdots \sharp M_n$ of $3$--manifolds $M_i$ which are prime in the sense that they can be 
decomposed as connected sums only in the trivial way $M_i = M_i \sharp S^3$ \cite{Kne, Milnor}. 
Note that an orientable, prime $3$--manifold is irreducible except for $S^2 \times S^1$. 
Beyond the prime decomposition, 
we have a further canonical decomposition of irreducible compact orientable $3$--manifolds, 
decomposing along tori rather than $2$--spheres. 
Precisely, for a compact irreducible orientable $3$--manifold $M$ 
there exists a collection $\mathcal{T} = \{ T_1, \dots, T_k \}$ of disjoint incompressible tori in $M$ such that each component of $M - \mathrm{int}N(\cup T_i)$ is either atoroidal or a Seifert fiber space, 
and a minimal such collection $\mathcal{T}$ is unique up to isotopy \cite{JS, Jo}; see also \cite{Hat}. 
Thurston's uniformization theorem \cite{Thurston,MB}, 
together with the Torus Theorem \cite{JS, Jo}, 
shows that an atoroidal piece which is not a Seifert fiber space admits a complete hyperbolic structure of finite volume. 
  
In group theoretic language, 
the prime decomposition corresponds to a decomposition by a free product, 
and the torus decomposition corresponds to a graph of groups with each edge group is isomorphic to $\mathbb{Z} \oplus \mathbb{Z}$, 
which generalizes a free product with amalgamation. 
Let $M$ be a compact orientable $3$--manifold. 
A generalized torsion element in the fundamental group of a prime factor or a decomposing piece (with respect to the torus decomposition) of $M$ becomes a generalized torsion element of $\pi_1(M)$. 

It is plausible to address the converse. 

\begin{question}
If $\pi_1(M)$ has a generalized torsion element $g$, 
then does it arise from the fundamental group of a prime factor or a decomposing piece? 
More precisely, 
is $g$ conjugate to a generalized torsion element in the fundamental group of a prime factor or a decomposing piece? 
\end{question}

The aim of this article is to answer this question in the positive for prime decompositions, 
while in the negative for torus decompositions. 

\begin{theorem}
\label{prime_decomposition}
Let $M$ be a compact, orientable $3$--manifold with the prime decomposition 
$M = M_1 \sharp M_2 \sharp \cdots \sharp M_n$. 
\begin{enumerate}
\item 
$\pi_1(M)$ has a generalized torsion element if and only if 
$\pi_1(M_i)$ has a generalized torsion element for some $i = 1, \dots, n$.

\item
If $\pi_1(M_i)$ is torsion-free for all $1 \le i \le n$, 
then any generalized torsion element in $\pi_1(M)$ is conjugate to a generalized torsion element in 
$\pi_1(M_i)$ for some $i$. 
\end{enumerate}
\end{theorem}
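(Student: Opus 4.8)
The plan is to derive both assertions from the free-product theorem recorded above---that any generalized torsion element in a free product of torsion-free groups is conjugate to a generalized torsion element in one of the factor groups---together with the Seifert--van Kampen theorem and an elementary observation about ordinary torsion. First I would record the group-theoretic shape of the prime decomposition: van Kampen's theorem gives a free product splitting
\[
\pi_1(M) \cong \pi_1(M_1) * \pi_1(M_2) * \cdots * \pi_1(M_n),
\]
and since summands homeomorphic to $S^3$ contribute trivial factors they may be discarded, so we may assume every $\pi_1(M_i)$ is nontrivial.

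The reverse implication in $(1)$ is immediate. If $g \in \pi_1(M_i)$ is a generalized torsion element, witnessed by $x_1, \dots, x_k \in \pi_1(M_i)$ with $(x_1 g x_1^{-1}) \cdots (x_k g x_k^{-1}) = 1$, then because the factor $\pi_1(M_i)$ embeds in the free product and $g \neq 1$ there, this very relation exhibits $g$ as a generalized torsion element of $\pi_1(M)$. Statement $(2)$ is equally direct: under its hypothesis $\pi_1(M)$ is a free product of torsion-free groups, so the free-product theorem applies verbatim and sends any generalized torsion element of $\pi_1(M)$ to a conjugate generalized torsion element inside some $\pi_1(M_i)$.

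For the forward implication of $(1)$ I would argue by cases. If some factor $\pi_1(M_i)$ contains an ordinary torsion element $t$, so that $t \neq 1$ and $t^k = 1$ for some $k \geq 2$, then $t$ is already a generalized torsion element of $\pi_1(M_i)$, since the product of $k$ trivially conjugated copies of $t$ equals $t^k = 1$; there is nothing further to prove. Otherwise every factor is torsion-free, and a single application of $(2)$ converts the given generalized torsion element of $\pi_1(M)$ into one in some $\pi_1(M_i)$. Together with the reverse implication this completes $(1)$.

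All the genuine difficulty lies in the free-product theorem itself; granting it, the present theorem is a formal consequence. The one point demanding attention in the reduction is that prime factors may carry torsion---lens-space summands being the basic example---so that $\pi_1(M)$ need not be a free product of torsion-free groups. This is exactly why the torsion-free hypothesis appears in $(2)$, and why the forward direction of $(1)$ must peel off the torsion case before it can invoke $(2)$.
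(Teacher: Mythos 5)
Your proposal is correct and follows essentially the same route as the paper, which deduces this theorem directly from the free-product result (Theorem~\ref{free_product}) via the van Kampen splitting $\pi_1(M) \cong \pi_1(M_1) * \cdots * \pi_1(M_n)$. Your explicit case analysis for part $(1)$ --- peeling off the case where some $\pi_1(M_i)$ has ordinary torsion, which is then automatically a generalized torsion element of that factor --- is exactly the (implicit) reduction the paper relies on when it says the theorem ``immediately follows.''
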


It should be remarked that in the second assertion of Theorem~\ref{prime_decomposition} we need the extra assumption on absence of torsion elements in $\pi_1(M_i)$. 

\begin{remark}
Let $M_1=L(p,1)$ and $M_2=L(q,1)$ be lens spaces $(p,q>1)$.
Then 
$\pi_1(M_1) = \langle a\ |\ a^p = 1 \rangle$, 
$\pi_1(M_2) = \langle b\ |\ b^q = 1 \rangle$ and 
$\pi_1(M_1 \sharp M_2) = \langle a, b\ |\ a^p = b^q = 1 \rangle \cong \mathbb{Z}_p * \mathbb{Z}_q$. 
For an element $ab$, we have
\[ (ab) \big(b^{q-1}(ab)b^{1-q}\big)\big(b^{q-2}(ab)b^{2-q}\big) \cdots \big(b(ab)b^{-1}\big)=a^{q}. \]
In the abelianization $\pi_1(M_1 \sharp M_2) \slash [\pi_1(M_1 \sharp M_2),\pi_1(M_1 \sharp M_2)]\cong \mathbb{Z}_p \oplus \mathbb{Z}_q$, $ab$ is of order $\mathrm{lcm}(p,q)$, the least common multiple of $p$ and $q$. 
Therefore $ab$ is a generalized torsion element of order of $\mathrm{lcm}(p,q)$.
However, $ab$ is not conjugate into $\pi_1(M_i)$ for $i = 1, 2$. 
\end{remark}

Theorem~\ref{prime_decomposition} immediately follows from a more general, purely algebraic result below, 
which solves the open problem due to V.M.~Kopytov and N.Ya.~Medvedev \cite[Problem 16.49]{KM} affirmatively. 

\begin{theorem}
\label{free_product}
Let $G = G_1 * \cdots * G_n$ be the free product of torsion-free groups. 
Suppose that $g \in G$ is a generalized torsion element. 
Then there is a generalized torsion element $g_i \in G_i$ which is conjugate to $g$ for some $i = 1, \dots, n$. 
Hence, 
$G$ has a generalized torsion element if and only if 
$G_i$ has a generalized torsion element for some $i$. 
\end{theorem}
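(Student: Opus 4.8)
My plan is to exploit the normal form for elements of a free product together with the assumption that each factor $G_i$ is torsion-free. Let $g \in G = G_1 * \cdots * G_n$ be a generalized torsion element, so that for some conjugates we have $(x_1 g x_1^{-1}) \cdots (x_k g x_k^{-1}) = 1$. After conjugating $g$ we may assume $g$ is cyclically reduced, i.e.\ its normal form is cyclically reduced as a word $g = a_1 a_2 \cdots a_m$ with each syllable $a_j$ lying in some factor and consecutive syllables (including $a_m, a_1$ cyclically) in distinct factors. The first step is to dispose of the case where $g$ has syllable length $m \ge 2$: here I want to show $g$ cannot be a generalized torsion element at all. The key tool is the classification of how reduction proceeds in a product of conjugates of a cyclically reduced element of length $\ge 2$; such an element has infinite order, and more is needed, since infinite order alone does not preclude generalized torsion.

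The heart of the argument is therefore to prove that a cyclically reduced element of syllable length $m \ge 2$ in a free product of \emph{torsion-free} groups is never a generalized torsion element. For this I would analyze the normal form of a product $w = \prod_{j=1}^k (x_j g x_j^{-1})$ and track the syllables that survive cancellation. The natural approach is to conjugate each $x_j g x_j^{-1}$ into cyclically reduced form and then bound below the syllable length of the product: because $g$ is cyclically reduced of length $\ge 2$, when we multiply conjugates together the "core" copies of $g$ cannot all cancel, so $w$ has positive syllable length and hence $w \ne 1$. Making this rigorous is the main obstacle, since the cancellation between adjacent conjugates $x_j g x_j^{-1}$ and $x_{j+1} g x_{j+1}^{-1}$ can be delicate; I expect to organize the bookkeeping by considering the maximal cancellation between consecutive conjugators and showing that at least one syllable from the central $g$ of each factor remains, or by appealing to a stable-commutator-length / translation-length type estimate (the abstract already flags stable commutator length as a keyword), which gives a clean quantitative lower bound that is incompatible with a product of conjugates being trivial.

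Once the syllable length $\ge 2$ case is eliminated, the remaining case is $m = 1$, meaning $g$ is conjugate to an element $g_i$ of a single factor $G_i$. The final step is then to verify that $g_i$ is itself a generalized torsion element of $G_i$. This follows because the relation $\prod_j (x_j g x_j^{-1}) = 1$ in $G$, after replacing $g$ by its conjugate $g_i \in G_i$, projects under the retraction $G \twoheadrightarrow G_i$ (killing the other factors) to a relation expressing a product of conjugates of $g_i$ equal to the identity, using that the image of each conjugate $x_j g_i x_j^{-1}$ is a conjugate of $g_i$ within $G_i$; one checks the product is a genuine nonempty product of conjugates and is nontrivial as an element of $G_i$, so $g_i$ is a generalized torsion element of $G_i$, and by construction it is conjugate to $g$ in $G$. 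The torsion-freeness hypothesis is exactly what rules out the lens-space phenomenon of the Remark, where $ab$ has length $2$ yet is a generalized torsion element precisely because the factors contain torsion.
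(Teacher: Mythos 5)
Your reduction to the syllable-length-one case and your final projection step are sound and coincide with the paper's closing argument: once $g$ is conjugate to some $g_1 \in G_1$, the retraction $p : G \to G_1$ sends the relation $\prod_j (x_j g_1 x_j^{-1}) = 1$ to a relation $\prod_j \bigl(p(x_j)\, g_1\, p(x_j)^{-1}\bigr) = 1$ in $G_1$, and $g_1 \neq 1$ since $g \neq 1$, so $g_1$ is a generalized torsion element of $G_1$. The problem is everything before that.

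The heart of the theorem --- that an element not conjugate into any factor (equivalently, cyclically reduced of syllable length $\geq 2$) cannot be a generalized torsion element when the factors are torsion-free --- is exactly the step you leave open, and neither of your two suggested routes closes it. The combinatorial route fails as described: in a product of conjugates $\prod_j (x_j g x_j^{-1})$ the ``core'' copies of $g$ \emph{can} all cancel. The paper's own Remark exhibits $g = ab \in \mathbb{Z}_p * \mathbb{Z}_q$, cyclically reduced of length two, with a product of $pq$ conjugates equal to the identity; since your bookkeeping nowhere invokes torsion-freeness of the factors, any cancellation argument of the kind you sketch would ``prove'' a false statement in that example. Torsion-freeness must enter in an essential way, and no normal-form argument is known to deliver this --- the statement was an open problem (Kourovka Notebook, Problem 16.49). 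Your second route, the scl estimate, is in fact the paper's actual proof, but it requires two nontrivial ingredients that you neither state nor establish: (i) Chen's spectral gap theorem, that in a free product of torsion-free groups any element not conjugate into a factor has $\mathrm{scl} \geq \tfrac{1}{2}$; and (ii) the upper bound that a generalized torsion element of order $k$ satisfies $\mathrm{scl}(g) \leq \tfrac{k-2}{2k} < \tfrac{1}{2}$, which the paper proves by a quasimorphism computation via Bavard duality. Point (ii) is not automatic: only honest torsion elements obviously have $\mathrm{scl} = 0$, and ``a product of conjugates is trivial'' does not by itself yield any bound on $\mathrm{scl}$ without an argument. As written, your outline names the two obstacles but resolves neither, so it is a restatement of the problem rather than a proof.
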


Let us turn to torus decompositions. 
The situation is quite different from prime decompositions. 
Actually we have: 

\begin{theorem}
\label{torus_decomposition}
For arbitrary $n > 1$, there exists an infinite family of $3$--manifolds $M$ such that:
\begin{itemize}
\item $M$ has a torus decomposition $\displaystyle M = M_1 \cup M_2 \cup \cdots \cup M_n$ 
with $n$ decomposing pieces. 
\item None of  $\pi_1(M_i)$ \($1 \le i \le n$\) has a generalized torsion element. 
\item $\pi_1(M)$ has a generalized torsion element.
\end{itemize}
\end{theorem}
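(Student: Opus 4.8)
The plan is to build $M$ by gluing together $n$ Seifert fibered pieces, arranged cyclically, whose fundamental groups are bi-orderable (so that, since a bi-orderable group has no generalized torsion element, each $\pi_1(M_i)$ is automatically clean), and to engineer the gluing so that a single curve on one of the decomposing tori becomes a generalized torsion element of $\pi_1(M)$. The guiding constraint is an elementary necessary condition: if $\prod_{i=1}^{k} x_i g x_i^{-1} = 1$, then abelianizing gives $g^k = 1$ in $H_1(\pi_1 M)$, so any generalized torsion element must represent a torsion class in $H_1(M)$. Thus I would first choose the gluing data so that $H_1(M)$ acquires a nontrivial torsion subgroup carried by a curve $c$ lying on one of the tori, and then upgrade this homological torsion to a genuine product-of-conjugates relation.

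For the pieces I would take $M_i = \Sigma_i \times S^1$, where $\Sigma_i$ is a compact orientable surface of positive genus with exactly two boundary circles; then $\pi_1(M_i) \cong \pi_1(\Sigma_i) \times \mathbb{Z}$ is a direct product of a free group with $\mathbb{Z}$, hence bi-orderable and in particular free of generalized torsion. Each $M_i$ has two boundary tori carrying the natural basis (base curve, regular fiber). I would glue $\partial_+ M_i$ to $\partial_- M_{i+1}$ (indices mod $n$) by homeomorphisms whose induced matrices interchange the fiber and base directions; this ``flip'' guarantees that the regular fibers of adjacent pieces do not match across each gluing torus, which is exactly the condition ensuring that each $T_i$ is an essential torus of the JSJ decomposition and that no two Seifert structures amalgamate. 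Consequently $M$ is a graph manifold whose torus decomposition has precisely the $n$ pieces $M_i$, giving the first two bullet points, while the off-diagonal entries of the gluing matrices can be chosen to force the desired torsion in $H_1(M)$.

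The heart of the argument is to exhibit an explicit nonempty product of conjugates of $c$ equal to the identity in $\pi_1(M)$. Here I would imitate the telescoping computation of the free-product Remark, in which a product of conjugates of $ab$ by powers of a torsion element collapses: the role played there by the relation $b^q = 1$ is now played by the central regular fibers together with the flip gluings, which relate the fiber of each piece to a base curve of the next and thereby let a telescoping product of conjugates of $c$ close up around the cycle. Working in the Bass--Serre normal form for the graph of groups with vertex groups $\pi_1(M_i)$ and edge groups $\mathbb{Z}^2$, I would verify that the conjugating elements genuinely alternate between vertex groups, so that the resulting relation cannot be pushed into any single piece; this is consistent with, and in no way contradicts, the bi-orderability of each $\pi_1(M_i)$. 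Finally, incompressibility of the JSJ tori shows $c \neq 1$, so $c$ is an honest generalized torsion element of $\pi_1(M)$.

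The main obstacle is precisely this last upgrade: promoting the purely homological fact that $c$ is torsion in $H_1(M)$ to an actual finite product of conjugates equal to $1$, and doing so in a way that provably exploits the global cyclic structure rather than a single vertex group. Arranging the flip matrices so that the telescoping closes up, while simultaneously keeping every piece a genuine, generalized-torsion-free Seifert piece, is the delicate step; by contrast the remaining verifications are routine. An infinite family for each fixed $n$ is then obtained by varying the genera of the $\Sigma_i$ or the off-diagonal gluing parameters, the resulting manifolds being distinguished by $H_1(M)$ or by the homeomorphism types of their JSJ pieces.
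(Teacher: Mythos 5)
Your proposal for the first two bullet points is sound: cyclically glued products $\Sigma_i \times S^1$ with fiber-flipping gluings do give a graph manifold whose torus decomposition has exactly the $n$ pieces, and $\pi_1(\Sigma_i) \times \mathbb{Z}$ is bi-orderable, hence free of generalized torsion. But the third bullet point --- the existence of a generalized torsion element in $\pi_1(M)$ --- is exactly where your argument stops being a proof and becomes a hope, and you say so yourself (``the main obstacle is precisely this last upgrade''). The gap is not a routine verification: homological torsion of the curve $c$ is only a \emph{necessary} condition for $c$ to be a generalized torsion element, and you provide no mechanism for producing an actual nonempty product of conjugates of $c$ equal to $1$. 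The analogy you invoke breaks down at the decisive moment: the telescoping computation in the paper's Remark collapses precisely because the factor group contains a genuine torsion element ($b^q = 1$), and that relation is what makes the product of conjugates of $ab$ close up. In your construction every vertex group is torsion-free and bi-orderable by design, so no such relation exists anywhere in the graph of groups, and it is entirely unclear that the relations coming from centrality of the fibers and the flip gluings can ever force a product of conjugates of a single peripheral curve to be trivial. Indeed, whether a given graph manifold with bi-orderable pieces has generalized torsion is a subtle question (it is closely tied to non-bi-orderability of the amalgam), and nothing in your setup certifies it.

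The paper sidesteps this difficulty with a completely different mechanism. It takes $K = K_{p_1} \sharp \cdots \sharp K_{p_n}$, a connected sum of positive twist knots, and forms the Dehn filling $K(m)$ for $m \ge 2n$; the pieces are the knot exteriors $E(K_{p_i})$ (bi-orderable by Clay--Desmarais--Naylor) and the filled composing space $X(m)$ (bi-orderable by Boyer--Rolfsen--Wiest). The generalized torsion element is the image of the meridian of $K$, and its product-of-conjugates relation is supplied by an external tool: $K$ bounds a coherent clasp disk, i.e.\ a $(2n,0)$--singular spanning disk, and a theorem of the authors' companion paper shows that such a disk forces the meridian to become a generalized torsion element in $\pi_1(K(\frac{m}{n}))$ once $\frac{m}{n} \ge 2n$. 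That geometric input (singular disk plus Dehn filling) is precisely what plays the role of the missing ``upgrade'' in your outline; without some substitute for it, your construction establishes only that a generalized torsion element is not excluded by homology, not that one exists.
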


Theorem~\ref{torus_decomposition} says that there is a ``global'' generalized torsion element with respect to torus decompositions.  

\medskip

The paper is organized as follows. 
In Section~\ref{section:free_product} we focus on prime decompositions and prove Theorem~\ref{free_product}. 
A key ingredient is a stable commutator length (scl) and we give an upper bound for the stable commutator length of generalized torsion elements 
(Lemma~\ref{scl_generalized_torsion}) in Subsection~\ref{scl_genralized_torsion}.   
Section~\ref{JSJ_decomposition} is devoted to a proof of Theorem~\ref{torus_decomposition}. 

\bigskip

\section{Generalized torsion and free product}
\label{section:free_product}

In this section we prove Theorem \ref{free_product}. 
Our main tool is the stable commutator length (scl, in short). For basics of scl we refer to \cite{Ca}.
Throughout the section, $G$ denotes a group.
\medskip 

\subsection{Basic facts on stable commutator length}

For $g \in [G,G]$ the \emph{commutator length} $\mathrm{cl}(g)$ is the smallest number of commutators in $G$ whose product is equal to $g$. 
The stable commutator length $\mathrm{scl}(g)$ of $g \in [G,G]$ is defined to be the limit
\begin{equation}
\label{def1}
\mathrm{scl}(g) = \lim_{n \to \infty} \frac{\mathrm{cl}(g^n)}{n}.
\end{equation}

Since $\mathrm{cl}(g^n)$ is non-negative and subadditive, Fekete's subadditivity lemma shows that 
this limit exists.  

We will extend (\ref{def1}) to the stable commutator length $\mathrm{scl}(g)$ for an element $g$ which is not necessarily in $[G, G]$ as 
\begin{equation}
\label{def2}
\mathrm{scl}(g) = \begin{cases}
\frac{\textrm{scl}(g^{k})}{k} & \mbox{ if } g^{k} \in [G,G] \mbox{ for some } k > 0,\\
\infty & \mbox{otherwise}.
\end{cases}
\end{equation}

By definition (\ref{def1}), it is easily observe that if $g^k,\, g^{\ell} \in [G, G]$, 
then $\displaystyle \frac{\mathrm{scl}(g^k)}{k} = \frac{\mathrm{scl}(g^{\ell})}{\ell}$ for any $k,\, \ell > 0$. 
So in (\ref{def2}) $\mathrm{scl}(g)$ is independent of the choice of $k > 0$ such that $g^k \in [G, G]$. 
In particular, for later use we note the following. 

\begin{lemma}
\label{scl_g^k}
For any $g \in G$ and $k > 0$,  
we have $\mathrm{scl}(g^{k})=k\,\mathrm{scl}(g)$. 
\end{lemma}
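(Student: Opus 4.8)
The plan is to unwind the two-part definition (\ref{def2}) and reduce everything to the homogeneity of the limit (\ref{def1}) under taking powers. First I would dispose of the degenerate case: suppose no positive power of $g$ lies in $[G,G]$, so that $\mathrm{scl}(g) = \infty$ by (\ref{def2}). Since $(g^k)^m = g^{km}$, any power of $g^k$ landing in $[G,G]$ would force a power of $g$ into $[G,G]$, which cannot happen; hence $\mathrm{scl}(g^k) = \infty$ as well, and the identity $\mathrm{scl}(g^k) = k\,\mathrm{scl}(g)$ holds with both sides infinite.

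Now assume $g^m \in [G,G]$ for some $m > 0$, so that $\mathrm{scl}(g) = \mathrm{scl}(g^m)/m$ is finite. Because $[G,G]$ is a subgroup, $(g^k)^m = (g^m)^k \in [G,G]$, so (\ref{def2}) applies to $g^k$ with exponent $m$ and gives
\begin{equation*}
\mathrm{scl}(g^k) = \frac{\mathrm{scl}\bigl((g^k)^m\bigr)}{m} = \frac{\mathrm{scl}(g^{km})}{m}.
\end{equation*}
It remains to compare $\mathrm{scl}(g^{km})$ with $\mathrm{scl}(g^m)$, and here I would invoke the observation recorded just before the statement: since both $g^m$ and $g^{km}$ lie in $[G,G]$, the quantity $\mathrm{scl}(g^j)/j$ computed via (\ref{def1}) is the same for $j = m$ and $j = km$, whence $\mathrm{scl}(g^{km}) = k\,\mathrm{scl}(g^m)$. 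Substituting,
\begin{equation*}
\mathrm{scl}(g^k) = \frac{k\,\mathrm{scl}(g^m)}{m} = k\cdot\frac{\mathrm{scl}(g^m)}{m} = k\,\mathrm{scl}(g),
\end{equation*}
as desired.

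The only substantive input is the homogeneity $\mathrm{scl}(h^n) = n\,\mathrm{scl}(h)$ for $h \in [G,G]$, which is exactly what underlies the cited well-definedness remark; it follows from (\ref{def1}) by passing to the subsequence $\mathrm{cl}(h^{nj})/j = n\cdot \mathrm{cl}(h^{nj})/(nj)$ and using that the full limit defining $\mathrm{scl}(h)$ already exists. I expect the main (though minor) obstacle to be purely organizational: keeping the two layers of the definition straight, namely the limit (\ref{def1}) valid for elements of $[G,G]$ versus the rational rescaling (\ref{def2}) for general elements, and ensuring the chosen exponent $m$ is shared consistently between $g$ and $g^k$ so that both invocations of (\ref{def2}) use a common power landing in $[G,G]$.
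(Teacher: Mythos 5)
Your proposal is correct and follows essentially the same route as the paper: both handle the infinite case first, then use $(g^k)^\ell = g^{k\ell} \in [G,G]$ together with the well-definedness observation (that $\mathrm{scl}(g^j)/j$ is independent of the choice of $j$ with $g^j \in [G,G]$) to chain the same equalities, just read in the opposite direction. The extra justification you give for the well-definedness remark via the subsequence argument is a sound elaboration of what the paper leaves as "easily observed."
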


\begin{proof}
If $g^{\ell} \not\in [G, G]$ for all $\ell > 0$, 
then $\mathrm{scl}(g) = \mathrm{scl}(g^{\ell}) = \infty$. 
We assume $g^{\ell} \in [G, G]$ for some $\ell > 0$. 
Then $g^{\ell k} \in [G, G]$, 
and 
$\displaystyle \mathrm{scl}(g) = \frac{\mathrm{scl}(g^{\ell})}{\ell} 
= \frac{\mathrm{scl}(g^{\ell k})}{\ell k} 
= \frac{1}{k}\, \frac{\mathrm{scl}((g^{k})^{\ell})}{\ell} 
=  \frac{\mathrm{scl}(g^{k})}{k}$. 
Thus we have $\mathrm{scl}(g^k) = k\, \mathrm{scl}(g)$. 
\end{proof}

When we need to specify the underlying group $G$, 
we write $\mathrm{scl}_G(g)$ instead of $\mathrm{scl}(g)$. 

A map $\phi:G \rightarrow \mathbb{R}$ is a \textit{quasimorphism} if 
\[ D(\phi) = \sup_{g,h \in G} |\phi(gh)-\phi(g)-\phi(h)|\]
is finite. 
The value $D(\phi)$ is called the \emph{defect} of $\phi$. 
Furthermore if $\phi(g^{n})=n\phi(g)$ holds for all $g \in G$ and $n \in \mathbb{Z}$, 
then we say that $\phi$ is \textit{homogeneous}. 
In particular, a homogeneous quasimorphism is antisymmetric, i.e. it satisfies $\phi(g^{-1})=-\phi(g)$. 
Note that a homogeneous quasimorphism $\phi$ is a class function, 
i.e. $\phi(g)=\phi(h^{-1}gh)$ for every $g,h \in G$; see \cite[2.2.3]{Ca}.  

\begin{theorem}[Bavard's duality theorem \cite{Ba}]
\label{Bavard duality}
For $g \in [G,G]$ we have
\[ \mathrm{scl}(g) = \sup_{\phi} \frac{|\phi(g)|}{2D(\phi)},\]
where $\phi$ runs all the homogenous quasimorphisms on $G$ which are not homomorphisms. 
\end{theorem}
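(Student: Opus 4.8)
The plan is to prove the two inequalities separately; the content of Bavard's theorem is an instance of the duality between a norm and its dual, with $\mathrm{scl}$ in the role of the norm and the defect $D$ in the role of the dual norm.

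First I would establish the easy inequality $\mathrm{scl}(g) \ge \frac{|\phi(g)|}{2D(\phi)}$ for every homogeneous quasimorphism $\phi$ which is not a homomorphism. The key elementary estimate is that if $c$ is a product of $k$ commutators in $G$, then $|\phi(c)| \le (2k-1)D(\phi)$. Indeed, applying the defect inequality $k-1$ times telescopes $\phi(c)$ to $\sum_{i=1}^{k}\phi([a_i,b_i])$ up to an error of $(k-1)D(\phi)$, and a short computation using that a homogeneous $\phi$ is a class function and antisymmetric gives $|\phi([a_i,b_i])| \le D(\phi)$ for each factor. Now I write $g^{n}$ as a product of $\mathrm{cl}(g^{n})$ commutators and apply this with $c = g^{n}$; since $\phi$ is homogeneous, $|\phi(g^{n})| = n|\phi(g)|$, so $n|\phi(g)| \le (2\,\mathrm{cl}(g^{n})-1)D(\phi)$. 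Dividing by $2nD(\phi)$ and letting $n \to \infty$ yields $\frac{|\phi(g)|}{2D(\phi)} \le \mathrm{scl}(g)$ by the definition (\ref{def1}), and taking the supremum over $\phi$ finishes this direction.

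The reverse inequality is the substantial part, and this is where I would invoke functional analysis. I would reformulate $\mathrm{scl}$ as a norm on a suitable real vector space of group chains: let $C_1(G)$ be the $\mathbb{R}$-vector space on the elements of $G$, let $B_1(G) \subseteq C_1(G)$ be the subspace of boundaries $\partial C_2(G)$, and pass to the quotient $B_1^{H}(G)$ by the relations $g^{n}-n\cdot g$ and $hgh^{-1}-g$. On $B_1^{H}(G)$ there is a filling (pseudo-)norm, built from the genus of surfaces bounding a chain, for which the class of an element $g \in [G,G]$ has norm equal to a fixed positive multiple of $\mathrm{scl}(g)$ (the constant being $4$ in the usual normalization). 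A homogeneous quasimorphism $\phi$, extended linearly, descends to a well-defined bounded linear functional on $B_1^{H}(G)$, precisely because the relations above are respected by a homogeneous class function, and one checks that its operator norm relative to the filling norm is controlled by $D(\phi)$, which recovers the easy inequality once more. The point of the theorem is that these functionals exhaust the dual: by the Hahn--Banach theorem, the filling norm of the class of $g$ equals the supremum of $|f(g)|$ over functionals $f$ in the unit ball of $B_1^{H}(G)^{*}$.

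The main obstacle is therefore to show that an arbitrary bounded functional on $B_1^{H}(G)$ can be realized by an honest homogeneous quasimorphism with controlled defect, that is, that the defect norm on the space $Q(G)$ of homogeneous quasimorphisms modulo $H^{1}(G;\mathbb{R})$ is exactly the dual norm to the filling norm on $B_1^{H}(G)$. Concretely, starting from a functional produced by Hahn--Banach I would have to construct a function $\phi\colon G \to \mathbb{R}$, verify that $D(\phi) < \infty$, and homogenize it to a class function, all while tracking the normalizing constants so that the factor $2D(\phi)$ emerges correctly. Producing this quasimorphism from an abstract functional, and pinning down the precise relationship between its defect and the dual norm, is the delicate heart of Bavard's argument; once that identification is in place, the equality $\mathrm{scl}(g) = \sup_{\phi}\frac{|\phi(g)|}{2D(\phi)}$ follows immediately from norm--dual-norm duality.
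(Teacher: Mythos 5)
The paper offers no proof of this statement: it is imported verbatim from Bavard \cite{Ba} and used as a black box, so the only meaningful comparison is of your proposal against the literature it gestures at. Judged on its own terms, your first half is complete and correct. The estimate $|\phi(c)| \le (2k-1)D(\phi)$ for $c$ a product of $k$ commutators (telescoping the defect inequality $k-1$ times and using $|\phi([a_i,b_i])| \le D(\phi)$, exactly as in Example~\ref{example:single_commutator}), together with homogeneity $|\phi(g^n)| = n|\phi(g)|$ and the definition (\ref{def1}), does yield $\frac{|\phi(g)|}{2D(\phi)} \le \mathrm{scl}(g)$ for every homogeneous quasimorphism $\phi$ that is not a homomorphism.

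The reverse inequality, however, is the actual content of Bavard's theorem, and your proposal describes it rather than proves it. You set up the correct framework --- the filling pseudo-norm on $B_1^H(G)$, the identification of $\mathrm{scl}$ with a fixed multiple of that norm, and Hahn--Banach duality --- but you explicitly defer the decisive step: showing that every bounded linear functional on $B_1^H(G)$ is realized by a genuine homogeneous quasimorphism whose defect computes the dual norm, i.e.\ that the dual Banach space of $\bigl(B_1^H(G), \mathrm{fill}\bigr)$ is $Q(G)/H^1(G;\mathbb{R})$ with (half) the defect norm. That identification is not a routine verification to be ``checked''; it is the theorem, and deferring it leaves the supremum possibly strictly smaller than $\mathrm{scl}(g)$. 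Note that this hard direction is precisely the one the present paper relies on: in the proof of Theorem~\ref{scl_generalized_torsion} the authors bound $|\phi(g^k)|/2D(\phi)$ uniformly over all $\phi$ and then conclude an upper bound on $\mathrm{scl}(g^k)$, which requires $\mathrm{scl} \le \sup_\phi$, not the easy inequality you established. So as a self-contained proof your writeup has a genuine gap; what you have is a correct proof of the lower-bound direction plus an accurate roadmap of where the remaining work lives in Bavard's (or Calegari's) argument.
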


\begin{example}[p.19 in \cite{Ca}]
\label{example:single_commutator}
For any homogeneous quasimorphism $\phi$, 
\begin{align*}
|\phi([a, b])| 
& = |\phi(a^{-1}b^{-1}ab) |\\
& = |\phi(a^{-1}b^{-1}ab) - \phi(a^{-1}b^{-1}a) - \phi(b) +  \phi(a^{-1}b^{-1}a) + \phi(b)|\\
& =  |\phi(a^{-1}b^{-1}ab) - \phi(a^{-1}b^{-1}a) - \phi(b) +  \phi(a^{-1}b^{-1}a) - \phi(b^{-1})|\\
& \leq  |\phi(a^{-1}b^{-1}ab) - \phi(a^{-1}b^{-1}a) - \phi(b)| +  |\phi(a^{-1}b^{-1}a) - \phi(b^{-1})|\\
& = |\phi(a^{-1}b^{-1}ab) - \phi(a^{-1}b^{-1}a) - \phi(b)|\\
& \leq D(\phi).
\end{align*}
Hence $\textrm{scl}([a, b]) \leq \frac{1}{2}$.
\end{example}

\medskip 

\subsection{Stable commutator length and generalized torsion elements}
\label{scl_genralized_torsion}

Obviously a torsion element $g$ has $\mathrm{scl}(g) = 0$. 
For a generalized torsion element we have the following. 

\begin{theorem} 
\label{scl_generalized_torsion}
If $g \in G$ is a generalized torsion element of order $k$, then $\mathrm{scl}(g) \leq \frac{k-2}{2k} < \frac{1}{2}$.
\end{theorem}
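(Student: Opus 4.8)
The plan is to translate the generalized-torsion relation into a single inequality for homogeneous quasimorphisms and then invoke Bavard's duality (Theorem~\ref{Bavard duality}).

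First I would check that $\mathrm{scl}(g)$ is finite. Writing the defining relation $(x_1 g x_1^{-1})\cdots(x_k g x_k^{-1}) = 1$ and passing to the abelianization, each conjugate of $g$ has the same image $\bar g$, so the relation becomes $\bar g^{\,k} = 1$; hence $g^k \in [G,G]$. By the extended definition \eqref{def2} together with Lemma~\ref{scl_g^k} we have $\mathrm{scl}(g) = \mathrm{scl}(g^k)/k$, and applying Bavard's duality to $g^k \in [G,G]$ with the homogeneity identity $\phi(g^k) = k\phi(g)$ reduces the problem to proving
\[
\frac{|\phi(g)|}{2 D(\phi)} \le \frac{k-2}{2k}
\]
for every homogeneous quasimorphism $\phi$ that is not a homomorphism (so that $D(\phi) > 0$).

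Next, fix such a $\phi$ and set $h_i = x_i g x_i^{-1}$. Since a homogeneous quasimorphism is a class function, $\phi(h_i) = \phi(g)$ for every $i$. The crucial step, mirroring the commutator estimate in Example~\ref{example:single_commutator}, is to avoid telescoping $\phi$ across all $k$ conjugates and instead rewrite the relation as $h_1 \cdots h_{k-1} = h_k^{-1}$. By antisymmetry of $\phi$ this gives $\phi(h_1 \cdots h_{k-1}) = -\phi(h_k) = -\phi(g)$. Applying the quasimorphism inequality to the product of the $k-1$ elements $h_1,\dots,h_{k-1}$ costs only $k-2$ copies of the defect:
\[
\Bigl| \phi(h_1 \cdots h_{k-1}) - \sum_{i=1}^{k-1} \phi(h_i) \Bigr| \le (k-2)\, D(\phi).
\]
Substituting $\phi(h_1 \cdots h_{k-1}) = -\phi(g)$ and $\sum_{i=1}^{k-1}\phi(h_i) = (k-1)\phi(g)$ yields $k\,|\phi(g)| \le (k-2)\, D(\phi)$, which is precisely the desired bound; taking the supremum over $\phi$ gives $\mathrm{scl}(g) \le \frac{k-2}{2k}$, and since $\frac{k-2}{2k} = \frac12 - \frac1k$, the strict inequality $<\frac12$ follows.

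The argument is short, and once Bavard's duality is available there is no real analytic difficulty. The one point that truly matters is that naively telescoping over all $k$ conjugates only produces the weaker bound $\frac{k-1}{2k}$ (from $\phi(1)=0$ and $k-1$ defect applications); the sharpening to $\frac{k-2}{2k}$ comes entirely from folding the last conjugate into the product via $h_k^{-1} = h_1\cdots h_{k-1}$ and exploiting antisymmetry, exactly the device that improves the single-commutator estimate to $\mathrm{scl}([a,b]) \le \frac12$. I would therefore regard getting this one-step saving right as the heart of the proof.
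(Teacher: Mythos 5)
Your proof is correct and takes essentially the same route as the paper: both arguments isolate one conjugate of $g$ (the paper conjugates the whole relation by $x_1^{-1}$ so the right-hand side becomes $g^{-1}$; you move $h_k$ across to get $h_1\cdots h_{k-1}=h_k^{-1}$), identify its $\phi$-value with $-\phi(g)$ via class-invariance and antisymmetry, telescope over the remaining $k-1$ conjugates at a cost of $(k-2)D(\phi)$, and finish with Bavard's duality together with Lemma~\ref{scl_g^k}. The ``one-step saving'' you highlight is exactly the device the paper uses.
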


\begin{proof}[Proof of Theorem~\ref{scl_generalized_torsion}]
Since $g$ is a generalized torsion of order $k$, 
there exist $x_{1},\ldots,x_{k} \in G$ such that
\[
(x_1 g x_1^{-1})(x_2 g x_2^{-1})(x_3 g x_3^{-1}) \cdots(x_{k}g x_{k}^{-1})=1,
\]
and hence $g^k = 1$ in $G/[G, G]$ and $g^{k} \in [G,G]$. 
By taking conjugate by $x_{1}^{-1}$ and putting $y_i = x_1^{-1}x_i$, we have
\[ g^{-1} = (y_2 g y_2^{-1})(y_3 g y_3^{-1}) \cdots(y_{k}g y_{k}^{-1}).\]
We estimate $\mathrm{scl}(g^{k})$ by using Bavard's duality.

Let $\phi$ be a homogeneous quasimorphism which is not a homomorphism. We put 
\begin{align*}
A_2 &= \phi((y_2 g y_2^{-1}) \cdots(y_{k}g y_{k}^{-1})) - \phi(y_2 g y_2^{-1}) - \phi((y_3 g y_3^{-1}) \cdots(y_{k}g y_{k}^{-1})), \\
A_3 &= \phi((y_3 g y_3^{-1}) \cdots(y_{k}g y_{k}^{-1})) - \phi(y_3 g y_3^{-1}) - \phi((y_4 g y_4^{-1}) \cdots(y_{k}g y_{k}^{-1})), \\
& \, \; \;  \vdots \\
A_{k-1} &=  \phi((y_{k-1} g y_{k-1}^{-1})(y_{k} g y_{k}^{-1})) - \phi(y_{k-1} g y_{k-1}^{-1}) - \phi(y_k g y_k^{-1}).
\end{align*}
Then $|A_i| \le D(\phi)$, and 
\[ A_2 + A_3 + \cdots + A_{k-1} = \phi((y_2 g y_2^{-1}) \cdots(y_{k}g y_{k}^{-1})) - \sum_{i=2}^{k} \phi(y_i g y_i^{-1}).\]
Since $\phi(y_i g y_i^{-1})=\phi(g)$, we conclude that 
\[ |\phi((y_2 g y_2^{-1})(y_3 g y_3^{-1}) \cdots(y_{k}g y_{k}^{-1})) - (k-1)\phi(g)| \leq (k-2)D(\phi). \]
On the other hand,  
\[\phi((y_2 g y_2^{-1})(y_3 g y_3^{-1}) \cdots(y_{k}g y_{k}^{-1})) = \phi(g^{-1}) = -\phi(g),\] 
so we get
\[  |\phi(g^{k})| = k|\phi(g)| \leq (k-2)D(\phi) \iff \frac{|\phi(g^{k})|}{2D(\phi)} \leq \frac{k-2}{2} \]
for every homogeneous quasimorphism $\phi$. 
By Bavard's duality this shows that $\mathrm{scl}(g^{k}) \leq \frac{k-2}{2}$. 
Since $\mathrm{scl}(g^{k}) = k\, \mathrm{scl}(g)$ (Lemma~\ref{scl_g^k}), 
we obtain the desired inequality $\textrm{scl}(g) \leq \frac{k-2}{2k} < \frac{1}{2}$. 
\end{proof}

\medskip

\subsection{Proof of Theorem~\ref{free_product}}
Let us recall the following result of Chen \cite{Chen}; 
originally Theorem A in \cite{Chen} requires a stronger condition ``$g \in [G, G]$'', 
but as the author remarked we can weaken this condition to  ``$g^n \in [G, G]$ for some $n \ge 1$''. 
Since $\mathrm{scl}_G(g) = \infty$ when $g^n \not\in [G, G]$ for any $n > 0$, 
we state the result in the following form. 

\begin{theorem}\cite[Theorem A]{Chen}
\label{theorem:Chen}
Let $G = G_1 * \cdots * G_n$ be the free product of torsion-free groups $G_i$. 
If $g \in G$ is not conjugate into $G_i$ for any $i$ $(1 \le i \le n)$, 
then $\mathrm{scl}_{G}(g) \geq \frac{1}{2}$.
\end{theorem}

\begin{proof}[Proof of Theorem~\ref{free_product}]
Let $g$ be a generalized torsion element in $G$ (of order $k$). 
Since $g$ is a generalized torsion of order $k$, 
there exist $x_{1},\ldots,x_{k} \in G$ such that

\begin{equation}
\label{g-torsion}
(x_1 g x_1^{-1})(x_2 g x_2^{-1})(x_3 g x_3^{-1}) \cdots(x_{k}g x_{k}^{-1})=1,
\end{equation}
and hence $g^k = 1$ in $G/[G, G]$ and $g^{k} \in [G,G]$.  
Assume for a contradiction that $g$ is not conjugate into $G_i$ for any $i$ $(1 \le i \le n)$.  
Then Theorem \ref{theorem:Chen} shows that $\mathrm{scl}(g) \ge \frac{1}{2}$. 
On the other hand, Theorem~\ref{scl_generalized_torsion} asserts that $g < \frac{1}{2}$. 
This is a contradiction. 
Hence, $g$ is conjugate into $G_i$ for some $i$ ($1 \le i \le n$).  
Without loss of generality we many assume $y g y^{-1} = g_1 \in G_1$ for some $y \in G$. 
In the remaining we show that $g_1$ is a generalized torsion element of order $k$ in $\pi_1(M_1)$. 
Following (\ref{g-torsion}), we have 
\begin{align*}
1 
& = y(x_1 g x_1^{-1})(x_2 g x_2^{-1})(x_3 g x_3^{-1}) \cdots(x_{k}g x_{k}^{-1})y^{-1} \\ 
& = (yx_1y^{-1}) (y g y^{-1}) (y x_1^{-1} y^{-1}) \cdots  (yx_k y^{-1}) (y g y^{-1}) (y x_k^{-1} y^{-1}) \\
& =  (yx_1y^{-1}) g_1 (y x_1 y^{-1})^{-1} \cdots  (yx_k y^{-1}) g_1 (y x_k y^{-1})^{-1}.
\end{align*}

Using the natural projection $p : G_1 * \cdots * G_n \to G_1$, 
this gives
\begin{align*}
G_1  \ni 1 
& = p( (yx_1y^{-1}) g_1 (y x_1 y^{-1})^{-1} \cdots  (yx_k y^{-1}) g_1 (y x_k y^{-1})^{-1} ) \\
& = (y_1x_{1, 1}y_1^{-1}) g_1 (y_1 x_{1, 1} y_1^{-1})^{-1} \cdots  (y_1x_{1, k} y^{-1}) g_1 (y x_{1, k} y^{-1})^{-1},
\end{align*}
where $y_1 = p(y) \in G_1$ and $x_{1, j} = p (x_j) \in G_1$ ($j = 1, \dots,k$).  

Hence we have  
\[
(y_1x_{1, 1}y_1^{-1}) g_1 (y_1 x_{1, 1} y_1^{-1})^{-1} \cdots  (y_1x_{1, k} y^{-1}) g_1 (y x_{1, k} y^{-1})^{-1} = 1. 
\]
This shows that $g_1$ is a generalized torsion element of order at most $k$ in $G_1$.  
If $g_1$ has order less than $k$, 
then $g = y^{-1}g_1 y$ has also order less than $k$, 
a contradiction. 
So $g_1$ is a generalized torsion element of order $k$. 

Since a torsion element is obviously a generalized torsion element, 
and a generalized torsion element in $G_i$ is also a generalized torsion element in $G = G_1* \cdots *G_n$, 
the last assertion immediately follows. 
\end{proof}

\bigskip

\section{Torus decompositions and generalized torsion elements}
\label{JSJ_decomposition}

In Section~\ref{prime_decomposition} we saw that a generalized torsion nicely behaves under prime decomposition. 
In this section we prove Theorem \ref{torus_decomposition}, 
which illustrates that the behavior of generalized torsion under torus decomposition is completely different. 

\medskip

\subsection{Creation of generalized torsion elements via Dehn fillings}
\label{Dehn_filling}

We introduce a useful way to create a generalized torsion element in a $3$--manifold group. 
To explain our construction, 
we prepare some notions.
A \emph{singular spanning disk} of a knot $K$ in $S^{3}$ is a smooth map $\Phi:D^{2} \rightarrow S^{3}$ (or, its image) such that $\Phi(\partial D) = K$ and that $K$ intersects $\Phi(\mathrm{int}D)$ transversely in finitely many points. 
Each intersection point $\Phi(\mathrm{int}D \cap K)$ has a sign according to the orientations. We say that $\Phi(D)$ a $(p, q)$--\textit{singular spanning disk} if 
$K$ intersects $\Phi(\mathrm{int}D)$ positively in $p$ points and negatively in $q$ points.

Such a disk appeared in early 3-dimensional topology -- Dehn's lemma \cite{Papa, Hom} says that $K$ has a $(0, 0)$--singular spanning disk if and only if $K$ is the trivial knot. 
More generally, $K$ has a $(p, q)$--singular spanning disk with $0\leq p,q \leq 1$ if and only if $K$ is still the trivial knot \cite{Ito1}. 

Let us recall the following theorem in \cite{IMT_filling}, 
which shows that a singular spanning disk can be used to create a generalized torsion.

\begin{theorem}[\cite{IMT_filling}]
\label{thm:g_torsion_span_disk}
Let $K$ be a knot in $S^3$. 
If $K$ has a $(p, 0)$--singular spanning disk, 
then the image of a meridian $\mu$ of $K$ in $\pi_1(K(\frac{m}{n}))$ is a generalized torsion element of order $m$ whenever $\frac{m}{n} \ge p$. 
Similarly, if $K$ has a $(0, q)$--singular spanning disk,  
then the image of a meridian $\mu$ in $\pi_1(K(\frac{m}{n}))$ is a generalized torsion element  of order $m$ whenever $\frac{m}{n}\le -q$. 
\end{theorem}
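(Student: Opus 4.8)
The plan is to construct, from a $(p,0)$--singular spanning disk, an explicit relation in $\pi_1$ of the knot exterior that expresses a suitable product of conjugates of the meridian in terms of the longitude, and then to push this relation forward under Dehn filling. First I would fix notation: let $E(K) = S^3 - \mathrm{int}N(K)$ be the knot exterior, with $\mu$ and $\lambda$ the meridian and preferred longitude on $\partial E(K)$, and recall that $\pi_1(E(K))$ is normally generated by $\mu$. The Dehn filling $K(\tfrac{m}{n})$ has fundamental group $\pi_1(E(K)) / \langle\!\langle \mu^m \lambda^n \rangle\!\rangle$, so that the filling relation reads $\mu^m \lambda^n = 1$, i.e. $\lambda^n = \mu^{-m}$ in $\pi_1(K(\tfrac{m}{n}))$.

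The geometric heart of the argument is to read off what a $(p,0)$--singular spanning disk $\Phi(D)$ tells us about the longitude. The image $\Phi(D)$ is a disk whose boundary is $K$ and whose interior meets $K$ transversely in $p$ points, all positive. The idea is to delete small neighborhoods of the intersection points, turning $\Phi(D)$ into a (possibly singular) surface in $E(K)$ whose boundary consists of the longitude $\lambda$ together with $p$ meridian curves, one around each puncture, each appearing with the same (positive) sign. Tracking this boundary as a word in $\pi_1(E(K))$, I would aim to show that $\lambda$ equals a product of $p$ conjugates of $\mu$ (up to taking $\mu^{-1}$ if orientation conventions differ), schematically
\[
\lambda = (w_1 \mu w_1^{-1})(w_2 \mu w_2^{-1}) \cdots (w_p \mu w_p^{-1})
\]
for suitable $w_j \in \pi_1(E(K))$. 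This is the standard way a spanning surface exhibits the longitude as a product of meridian conjugates, and the sign hypothesis $(p,0)$ is exactly what guarantees all $p$ factors carry the same sign so no cancellation occurs.

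Granting this, the remainder is algebra in the quotient. In $\pi_1(K(\tfrac{m}{n}))$ we have $\lambda^n = \mu^{-m}$, so raising the displayed relation to the $n$-th power and combining with the filling relation produces an equation of the form
\[
\mu^{-m} = \prod_{j} (v_j \mu v_j^{-1}),
\]
a product of $np$ conjugates of $\mu$; rearranging gives a nonempty product of conjugates of $\mu$ equal to the identity, so $\mu$ is a generalized torsion element. To pin down that the order is exactly $m$ (not a proper divisor) when $\tfrac{m}{n} \ge p$, I would pass to the abelianization $H_1(K(\tfrac{m}{n})) \cong \mathbb{Z}/m$, in which $\mu$ has order exactly $m$; any product of conjugates of $\mu$ equal to $1$ must have length divisible by $m$, forcing the order to be $m$. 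The condition $\tfrac{m}{n} \ge p$ should enter precisely to ensure the constructed product is genuinely nonempty and of positive length (equivalently, that $m \ge np > 0$), so that the filling does not trivialize $\mu$. The symmetric $(0,q)$ statement follows by reversing orientation, which swaps the roles of $p$ and $q$ and the sign of the slope.

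The main obstacle I expect is the geometric step of turning the singular spanning disk into a controlled word for $\lambda$ in $\pi_1(E(K))$: one must handle the singularities of $\Phi$ carefully (the map need not be an embedding away from the punctures), choose consistent arcs connecting a basepoint to each puncture to define the conjugating elements $w_j$, and verify that the uniform positivity of the $p$ intersection signs really does prevent the conjugates of $\mu$ from appearing with mixed exponents. Everything downstream is routine manipulation in the group and its abelianization, but this translation between the transverse-intersection data of $\Phi(D)$ and the algebraic factorization of $\lambda$ is where the real content lies.
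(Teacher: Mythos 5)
The paper itself contains no proof of this statement---it is quoted from \cite{IMT_filling}, which is ``in preparation''---so your argument can only be judged on its own terms. Your overall strategy (puncture the singular disk to get a relation in $\pi_1(E(K))$, impose the filling relation, count conjugates, and use $H_1(K(\frac{m}{n})) \cong \mathbb{Z}/m$ to pin the order down to exactly $m$) is the right skeleton, but the central geometric step is stated incorrectly, and the error propagates fatally. The relation $\lambda = (w_1\mu w_1^{-1})(w_2\mu w_2^{-1})\cdots(w_p\mu w_p^{-1})$ cannot hold in $\pi_1(E(K))$ for $p \geq 1$: the preferred longitude is null-homologous in the exterior, while the right-hand side has class $p[\mu] \neq 0$ in $H_1(E(K)) \cong \mathbb{Z}$. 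What the punctured disk actually exhibits as a product of $p$ positive conjugates of $\mu$ is its \emph{outer} boundary, which is the disk-framed pushoff of $K$; since the subdisk that this pushoff bounds meets $K$ in $p$ positive points, the pushoff has linking number $p$ with $K$, i.e. it is the slope-$p$ curve $\mu^p\lambda$, not $\lambda$. The correct relation is therefore $\mu^p\lambda = (w_1\mu w_1^{-1})\cdots(w_p\mu w_p^{-1})$.

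This correction changes where the hypothesis $\frac{m}{n} \geq p$ enters, and it is not where you put it. With the corrected relation, in $\pi_1(K(\frac{m}{n}))$ (take $n \geq 1$) one computes, using that $\mu$ and $\lambda$ commute,
\[
1 = \mu^m\lambda^n = \mu^{m-pn}\,(\mu^p\lambda)^n = \mu^{m-pn}\Bigl(\prod_{j=1}^{p} w_j\mu w_j^{-1}\Bigr)^{n},
\]
which is a product of exactly $(m-pn)+pn = m$ conjugates of $\mu$, \emph{all with exponent $+1$}, precisely when $m - pn \geq 0$, i.e. $\frac{m}{n} \geq p$. That sign coherence is the entire content of the hypothesis; it has nothing to do with the product being ``nonempty''. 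Indeed, your version of the argument never genuinely uses $\frac{m}{n} \geq p$ and would ``prove'' that the meridian is a generalized torsion element after every positive filling, which is too strong. Your own abelianization test would have caught the problem: your claimed identity $\mu^{-m} = \prod_{j=1}^{np}(v_j\mu v_j^{-1})$ abelianizes in $\mathbb{Z}/m$ to $0 \equiv np \pmod{m}$, which is false in general. Once the relation is corrected, the rest of your argument does work: a vanishing product of $k$ conjugates of $\mu$ forces $m \mid k$ in homology, so the order is exactly $m$, and the $(0,q)$ case follows by mirroring as you say.
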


A typical and fundamental example of $(p, 0)$-- or $(0, q)$--singular spanning disk is a clasp disk having the same sign of clasps, 
which we call a \emph{coherent clasp disk}. 

\medskip

\subsection{Construction of toroidal $3$--manifolds with only global generalized torsion elements}
\label{construction}

\begin{proof}[Proof of Theorem \ref{torus_decomposition}]
We take a connected sum $K = K_{p_1} \sharp K_{p_2} \sharp \cdots \sharp K_{p_n}$ 
of positive twist knots $K_{p_i}$ with $p_i > 0$ for $i = 1, \dots, n$ (Figure~\ref{connected_sum_twist_knots_n}). 

\begin{figure}[htpb]
\includegraphics*[width=0.9\textwidth]{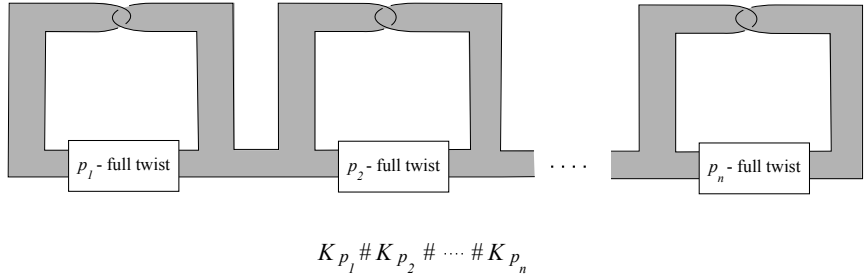}
\caption{$K_{p_1} \sharp K_{p_2} \sharp \cdots \sharp K_{p_n}$ bounds $(2n,0)$-- singular spanning disk with $n$ positive clasps}
\label{connected_sum_twist_knots_n}
\end{figure}

Then $E(K)$ has the torus decomposition 
$E(K) = E(K_{p_1}) \cup  E(K_{p_2}) \cup \cdots \cup E(K_{p_n}) \cup X$, 
where $X$ is the $n$--fold composing space, i.e. $[\textrm{disk with $n$--holes}] \times S^1$. 
Note that $\partial X = \partial E(K) \cup \partial E(K_1) \cup \cdots \cup \partial E(K_n)$ and 
$K(m) = E(K_{p_1}) \cup  E(K_{p_2}) \cup \cdots \cup E(K_{p_n}) \cup X(m)$, 
where $X(m)$ is the $3$--manifold obtained by gluing $S^1 \times D^2$ so that $\{ * \} \times D^2$ is identified a simple closed curve 
on $\partial E(K) \subset \partial X$ with slope $m$. 
Since the regular fiber of $\partial X$ is a meridian of $K$, 
the dual knot $K^*$ of $K$ becomes a regular fiber in $X(m)$ for any integer $m$. 
This means that $X(m)$ is the $(n-1)$--fold composing space. 
Thus $K(m) = E(K_{p_1}) \cup  E(K_{p_2}) \cup \cdots \cup E(K_{p_n}) \cup X(m)$ gives the torus decomposition of $K(m)$ 
with $(n+1)$--decomposing pieces; 
when $n = 2$, then $X(m)$ is homeomorphic to $S^1 \times S^1 \times [0, 1]$ and hence 
$K(m) = E(K_{p_1}) \cup  E(K_{p_2})$, which has two decomposing pieces.  
We denote $K(m)$ by $M_{p_1, \dots, p_n, m}$ and, for simplicity, we assume $m > 0$ in the following. 

\begin{claim}
\label{no_torsion}
For each decomposing piece $E(K_{p_i})$ and $X(m)$, their fundamental group have no generalized torsion. 
\end{claim}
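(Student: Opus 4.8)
The plan is to show that each decomposing piece has a \emph{bi-orderable} fundamental group, since a bi-orderable group has no generalized torsion element (as recalled in the introduction). This reduces Claim~\ref{no_torsion} to producing bi-orderings on $\pi_1(E(K_{p_i}))$ and on $\pi_1(X(m))$.

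\textbf{The composing space.} First I would handle $X(m)$. Since $X(m)$ is an $(n-1)$--fold composing space (a product $[\text{planar surface}] \times S^1$ when $n \ge 3$, and $S^1 \times S^1 \times [0,1]$ when $n = 2$), its fundamental group is $F \times \mathbb{Z}$, where $F$ is a free group (of rank $n-2$ for the planar surface, and $F \cong \mathbb{Z}$ in the torus-shell case $n=2$). Free groups are bi-orderable (e.g.\ by the Magnus embedding into a ring of formal power series, or via the lower central series), $\mathbb{Z}$ is bi-orderable, and a direct product of bi-orderable groups is bi-orderable. Hence $\pi_1(X(m)) \cong F \times \mathbb{Z}$ is bi-orderable and has no generalized torsion element.

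\textbf{The knot-exterior pieces.} Next I would show $\pi_1(E(K_{p_i}))$ is bi-orderable for each positive twist knot $K_{p_i}$. Twist knot exteriors are fibered over $S^1$ with fiber a once-punctured torus (the twist knots are exactly the double twist knots with one parameter equal to $1$, and in particular their exteriors fiber with free fiber group of rank $2$). A standard criterion states that the fundamental group of a surface bundle is bi-orderable provided the fiber group is bi-orderable and the monodromy acts on the fiber group preserving a bi-ordering, equivalently provided the homological monodromy (the action on $H_1(\text{fiber})$) has all eigenvalues real and positive. For positive twist knots the monodromy matrix can be written down explicitly, so the plan is to compute its eigenvalues and verify they are real and positive. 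Alternatively, one can invoke the known fact that fibered knots whose Alexander polynomial has all roots real and positive have bi-orderable knot groups, and check this condition for the Alexander polynomial of $K_{p_i}$.

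\textbf{Main obstacle.} The routine part is the composing space; the substantive step is the bi-orderability of the twist-knot exteriors, and the crux is verifying the eigenvalue/root-positivity condition for the monodromy of the \emph{positive} twist knots $K_{p_i}$ with $p_i > 0$. The sign of the twisting is exactly what should make this work, and the delicate point is confirming that the positivity hypothesis in the surface-bundle bi-ordering criterion is met (so that a monodromy-invariant bi-ordering on the rank-$2$ free fiber group exists and extends to the bundle group). Once that eigenvalue computation is carried out and the criterion applies, bi-orderability of each $\pi_1(E(K_{p_i}))$ follows, and with the composing space already handled, Claim~\ref{no_torsion} is established.
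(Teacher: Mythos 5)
Your treatment of the composing space is correct, and in fact more self-contained than the paper's own argument: the paper simply cites \cite[Theorem~1.5]{BRW}, whereas you observe directly that $X(m)$ is a product of a planar surface with $S^1$, so $\pi_1(X(m)) \cong F \times \mathbb{Z}$ with $F$ free, and such a group is bi-orderable. (Minor slip: since $X(m)$ is the $(n-1)$--fold composing space, i.e.\ a disk with $n-1$ holes times $S^1$, the free factor $F$ has rank $n-1$, not $n-2$; for $n=2$ this gives $\mathbb{Z}\times\mathbb{Z}$, consistent with $X(m)\cong T^2\times[0,1]$. Nothing in your argument depends on the rank.)

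The knot-exterior half, however, has a genuine gap which is fatal to the approach as written: the positive twist knots $K_p$ are \emph{not} fibered for $p \geq 2$, so there is no monodromy to which the Perron--Rolfsen surface-bundle criterion could be applied. Concretely, a fibered knot has monic Alexander polynomial, while the Alexander polynomial of $K_p$ is (up to units) $p\,t^2 - (2p+1)t + p$, which is non-monic for $p \geq 2$; equivalently, the only genus-one fibered knots in $S^3$ are the trefoil and the figure-eight knot, so among the twist knots only one member of the family could possibly be handled this way. Your stated alternative --- ``fibered knots whose Alexander polynomial has all roots real and positive have bi-orderable knot groups'' --- is the same fibered-knot criterion and fails for the same reason. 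It is true that the roots of $p\,t^2-(2p+1)t+p$ are real and positive (the discriminant is $4p+1>0$), so the conclusion is plausible, but for non-fibered knots this root condition is not by itself a sufficient criterion that you can invoke; some substitute for the fibration is needed. This is exactly why the paper instead cites Clay--Desmarais--Naylor \cite{CDN}, whose results (built on theorems of Chiswell, Glass and Wilson on one-relator groups) apply to non-fibered knots and show that the group of every positive twist knot is bi-orderable. To repair your proof, replace the fibration step by the citation to \cite{CDN} (or by an independent argument valid for these non-fibered knot groups); the composing-space step can stand as you wrote it.
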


\begin{proof}
Since $p_i > 0$, \cite{CDN} shows that $\pi_1(E(K_{p_i}))$ is bi-orderable, and thus it has no generalized torsion element.  
Recall that $X(m)$ is a composing space which is a locally trivial circle bundle over a punctured disk, 
referring \cite[Theorem~1.5]{BRW} we see that $X(m)$ is also bi-orderable, 
and hence its fundamental group does not have a generalized torsion element. 
\end{proof}

\begin{claim}
\label{torsion}
$\pi_1(M)$ has a generalized torsion element. 
\end{claim}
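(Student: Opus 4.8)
The plan is to produce an explicit generalized torsion element in $\pi_1(M) = \pi_1(K(m))$, namely the image of a meridian $\mu$ of $K$, by verifying the hypothesis of Theorem~\ref{thm:g_torsion_span_disk} and then quoting it. Everything reduces to exhibiting a coherent singular spanning disk for the connected sum $K = K_{p_1} \sharp \cdots \sharp K_{p_n}$.

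First I would observe that, since each $p_i > 0$, the twist knot $K_{p_i}$ bounds a coherent clasp disk with a single positive clasp; this clasp meets $K_{p_i}$ transversely in two points, both positive, so $K_{p_i}$ admits a $(2, 0)$--singular spanning disk. I would then assemble these into a singular spanning disk for $K$ by taking the boundary connected sum of the $n$ clasp disks along the bands that realize the connected sum. As the connecting bands introduce no new intersections with $K$ and do not alter the signs of the existing clasp intersections, the result is precisely the $(2n, 0)$--singular spanning disk with $n$ positive clasps drawn in Figure~\ref{connected_sum_twist_knots_n}.

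Armed with this $(2n, 0)$--singular spanning disk, I would invoke Theorem~\ref{thm:g_torsion_span_disk} with $p = 2n$. Since the surgery slope here is the integer $m$, its hypothesis reads $m \ge 2n$, which we include among the defining conditions of the family. Under this condition the theorem asserts that the image of $\mu$ in $\pi_1(K(m)) = \pi_1(M)$ is a generalized torsion element of order $m$; since $n > 1$ gives $m \ge 2n \ge 4 > 1$, this element is nontrivial, establishing the claim.

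The one step demanding genuine care is the geometric assembly in the second paragraph: one must confirm that joining the coherent positive clasp disks by the connected-sum bands yields a singular spanning disk whose $2n$ interior intersection points are all positive, with no cancelling pair created in the process. This is a routine orientation bookkeeping, made transparent by Figure~\ref{connected_sum_twist_knots_n}; once it is granted, the remainder is a direct citation of Theorem~\ref{thm:g_torsion_span_disk}.
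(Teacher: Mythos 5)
Your proposal is correct and follows essentially the same route as the paper: both exhibit the coherent $(2n,0)$--singular spanning disk for $K$ (the paper simply cites Figure~\ref{connected_sum_twist_knots_n}, while you spell out the boundary connected sum of the $n$ positive clasp disks) and then apply Theorem~\ref{thm:g_torsion_span_disk} with the slope condition $m \ge 2n$. Your explicit treatment of the assembly and of the nontriviality of the resulting element of order $m$ merely fills in details the paper leaves to the figure.
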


\begin{proof}
Observe that $K = K_{p_1} \sharp K_{p_2} \sharp \cdots \sharp K_{p_n}$ bounds a coherent $(2n, 0)$--singular disk (with $n$ positive clasp singularities); 
see Figure~\ref{connected_sum_twist_knots_n}.  
Then Theorem~\ref{thm:g_torsion_span_disk} shows that the image of a meridian of $K$ in $\pi_1(K(m))$ is a generalized torsion element 
for $m \ge 2n$. 
\end{proof}

\begin{claim}
\label{classification}
$M_{p_1, \dots, p_n, m}$ is homeomorphic to $M_{q_1, \dots, q_{n'}, m'}$ 
if and only if $(p_1, \dots, p_n) = (q_1, \dots, q_{n'})$ up to permutation and $m = m'$. 
\end{claim}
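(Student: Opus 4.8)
Looking at Claim~\ref{classification}, I need to prove that the homeomorphism type of $M_{p_1,\dots,p_n,m}$ completely determines (and is determined by) the unordered tuple of twist parameters together with the filling slope.

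=== PROOF PROPOSAL ===

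The plan is to exploit the fact that the torus decomposition of $M_{p_1,\dots,p_n,m}$ established above is canonical, so that a homeomorphism between two such manifolds must carry decomposition tori to decomposition tori and hence induce a homeomorphism between the corresponding collections of pieces. The ``if'' direction is immediate: if the two tuples agree up to permutation and $m=m'$, then the manifolds are built by gluing homeomorphic pieces along homeomorphic boundary tori in the same pattern, so they are homeomorphic. The substance is the ``only if'' direction, which I would organize around recovering each combinatorial datum from the topology.

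\textbf{Step 1 (matching the decompositions).} First I would invoke the uniqueness of the torus decomposition (cited earlier via \cite{JS, Jo}) to conclude that any homeomorphism $h : M_{p_1,\dots,p_n,m} \to M_{q_1,\dots,q_{n'},m'}$ may be isotoped to carry the canonical family of decomposing tori of the source to that of the target. Since the decomposition of $M_{p_1,\dots,p_n,m}$ has one Seifert-fibered piece $X(m)$ (the composing space) and $n$ hyperbolic knot-exterior pieces $E(K_{p_i})$ (twist knot exteriors other than the trefoil/figure-eight are hyperbolic, and in any case atoroidal and not composing spaces), $h$ must match the single composing piece on each side and the collections of knot-exterior pieces on each side. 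This already forces $n = n'$, and it sets up a bijection between the pieces $\{E(K_{p_i})\}$ and $\{E(K_{q_j})\}$.

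\textbf{Step 2 (recovering the twist parameters).} The induced homeomorphisms $E(K_{p_i}) \to E(K_{q_{\sigma(i)})}$ give, after passing to fundamental groups or using knot-complement rigidity, that the twist knots $K_{p_i}$ and $K_{q_{\sigma(i)}}$ have homeomorphic exteriors; by the knot-complement theorem of Gordon--Luecke the knots themselves agree, and since twist knots are distinguished by their defining integer $p_i$ (e.g. via their Alexander polynomials $p_i t - (2p_i+1) + p_i t^{-1}$, or their hyperbolic volumes), I obtain $p_i = q_{\sigma(i)}$. Thus $(p_1,\dots,p_n) = (q_1,\dots,q_n)$ up to the permutation $\sigma$.

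\textbf{Step 3 (recovering the slope).} It remains to show $m = m'$. Here I would use that the homeomorphism restricts to the composing pieces $X(m) \to X(m')$ and respects the boundary tori, in particular the distinguished boundary torus arising from $\partial E(K)$ that was filled with slope $m$. The slope $m$ can be read off intrinsically as the first homology of the filled manifold, or more robustly from the order of the generalized torsion element produced in Claim~\ref{torsion} (the meridian image has order $m$ in $\pi_1(K(m))$, an invariant of the group and hence of the homeomorphism type). Comparing these orders, or comparing $H_1$, forces $m = m'$.

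The main obstacle I anticipate is Step~3, and within it the bookkeeping of framings and orientations: one must be careful that the homeomorphism preserves the meridian--longitude data well enough that the filling slope is genuinely a homeomorphism invariant rather than being altered by a self-homeomorphism of the composing space $X$ that permutes its boundary components or reverses orientations. Pinning down that the relevant boundary torus (the one that gets Dehn filled) is distinguished from the torus components coming from the $E(K_{p_i})$ -- which it is, since only the former bounds the filling solid torus while the latter separate off hyperbolic pieces -- is what makes the slope recoverable.
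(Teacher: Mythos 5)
Your proposal is correct and follows essentially the same route as the paper's proof: uniqueness of the torus decomposition to match up the pieces, the Alexander-polynomial classification of twist knots to recover the parameters $p_i$, and $H_1(K(m)) \cong \mathbb{Z}_m$ to recover the slope. Your version is simply more detailed (the appeal to Gordon--Luecke is unnecessary, since the Alexander polynomial is already an invariant of the knot exterior, and the framing/orientation worries in Step~3 evaporate once one uses $H_1$ as you suggest).
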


\begin{proof}
The ``if'' part is obvious, so we prove the ``only if'' part.  
Since $H_1(K(m)) \cong \mathbb{Z}_m$, the second condition is immediate. 
The classification of twist knots using Alexander polynomials shows that 
$E(K_p)$ is homeomorphic to $E(K_q)$ only when $p = q$. 
Hence, the first condition follows from the uniqueness of the torus decomposition. 
\end{proof}

The infinite family $\{ M_{p_1, \dots, p_n, m} \}$ gives a desired family. 
\end{proof}

\begin{remark}\
\label{global_local}
\begin{enumerate}
\item
In the above construction, when $n > 2$, 
the fundamental group $\pi_1(K(m))$ is described by a ``graph of groups'' with vertex groups 
$\pi_1(E(K_{p_i}))$, $\pi_1(X(m))$ and edge groups $\pi_1(T_i) \cong \mathbb{Z} \oplus \mathbb{Z}$ with $T _i = \partial E(K_{p_i})$ rather than a usual free product with amalgamation. 
See Figure~\ref{graph_groups}. 
Actually a free product with amalgamation corresponds to a graph of groups on a graph with two vertices and one edge.  
\begin{figure}[htpb]
\includegraphics*[width=0.35\textwidth]{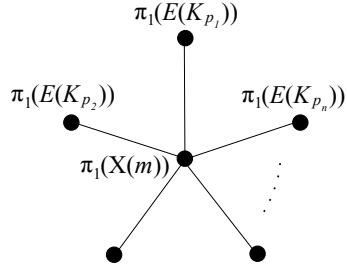}
\caption{$\pi_1(K(m))$ is expressed as a graph of groups.}
\label{graph_groups}
\end{figure}

\item
The generalized torsion element $g$ in $\pi_1(K(m))$ is the image of the meridian of $K = K_{p_1} \sharp K_{p_2} \sharp \cdots \sharp K_{p_n}$, 
so it is contained in $\pi_1(X(m))$. 
However, as shown in Claim~\ref{no_torsion}, 
$g$ cannot be a generalized torsion element in $\pi_1(X(m))$. 

\item
As we mentioned, the meridian of $E(K)$ is a regular fiber of $X$ and freely homotopic to a meridian of $E(K_{p_i})$ for each $i = 1, \dots, n$.  
Therefore after Dehn filling, the generalized torsion element $g$ is conjugate to $g_i$ in $\pi_1(E(K_{p_i}))$ for each $i$. 
However, $g_i$ cannot be a generalized torsion element in $\pi_1(E(K_{p_i}))$. 

\item
Roughly speaking, 
the generalized torsion element $g$ is ``global'' as a generalized torsion element, 
but itself is ``local'' in the sense that $g$ is conjugate into an edge group. 
\end{enumerate}
\end{remark}

Remark~\ref{global_local} leads us to ask: 

\begin{question}
Let $M$ be a compact orientable irreducible $3$--manifold 
and $\mathcal{T}$ a family of essential tori in $M$ which gives a torus decomposition of $M = M_1 \cup \cdots \cup M_n$. 
Suppose that $g$ is a generalized torsion element in $\pi_1(M)$. 
Then does either (1) or (2) hold?
\begin{enumerate}
\item
$g$ is conjugate to a generalized torsion element in $\pi_1(M_i)$ for some $i$. 

\item
If none of $\pi_1(M_i)$ $(1 \le i \le n)$ has a generalized torsion element, 
then $g$ is conjugate into an edge group $\pi_1(T)$ for some $T \in \mathcal{T}$. 
\end{enumerate}
\end{question}

\bigskip

\end{document}